\documentclass[11pt, a4paper]{article}
\usepackage{amssymb,amsmath,amsthm,tocloft}
\usepackage{fancyhdr}
\usepackage{comment}
\usepackage[british]{babel}
\usepackage{verbatim}
\usepackage{stackengine}
\usepackage[usenames,dvipsnames,table]{xcolor}
\usepackage{graphicx,tikz,caption,subcaption}
\usetikzlibrary{decorations.pathreplacing}
\usetikzlibrary{decorations.pathmorphing}
\usetikzlibrary{calc,arrows,decorations.markings}
\tikzset{snake it/.style={decorate, decoration=snake}}
\usepackage{bbm,dsfont}
\usepackage[d]{esvect}

\setlength{\cftbeforesecskip}{0pt}
\setlength\cftparskip{0pt}
\setlength\cftaftertoctitleskip{7pt}

\usepackage{hyperref}

\setlength\marginparwidth{2cm} 
\usepackage{todonotes} 

\usepackage[margin=20mm,bottom=15mm,footskip=7mm,top=15mm]{geometry}

\let\OLDthebibliography\thebibliography
\renewcommand\thebibliography[1]{
  \OLDthebibliography{#1}
  \setlength{\parskip}{0pt}
  \setlength{\itemsep}{0pt plus 0.3ex}
}

\newtheorem{theorem}{Theorem}[section]
\newtheorem{lemma}[theorem]{Lemma}

\newcommand{\repeatlabel}{}
\newtheorem*{repeatlemma}{Lemma \repeatlabel}

\theoremstyle{definition}

\newcommand{\bR}{\mathbb{R}}

\newcommand{\cF}{\mathcal{F}}

\newcommand{\size}[1]{\left| #1 \right|}
\newcommand{\bfm}{\mathbf m}
\newcommand{\bfn}{\mathbf n}

\newcommand{\bfr}{\mathbf r}

\newcommand{\w}{\vv{\mathrm{wsat}}}
\newcommand{\wsat}{\mathrm{wsat}}

\makeatletter
\providecommand*{\cupdot}{%
  \mathbin{%
    \mathpalette\@cupdot{}%
  }%
}
\newcommand*{\@cupdot}[2]{%
  \ooalign{%
    $\m@th#1\cup$\cr
    \hidewidth$\m@th#1\cdot$\hidewidth
  }%
}
\makeatother

\usepackage{enumitem}

\newlist{thmlist}{enumerate}{1}
\setlist[thmlist]{label=(\arabic{thmlisti}), ref=\thethm.(\arabic{thmlisti}),noitemsep}

\usepackage{thmtools}
\theoremstyle{plain}
\declaretheorem[
    name=Theorem,
    Refname={Theorem,Theorems},
    numberwithin=section]{thm}

\declaretheorem[
    name=Proposition,
    Refname={Proposition,Propositions},
    sibling=thm]{prop}

\usepackage{nameref,hyperref}
\usepackage[capitalize]{cleveref}

\Crefname{thm}{Theorem}{Theorems}
\Crefname{lem}{Lemma}{Lemmas}
\Crefname{prop}{Proposition}{Propositions}

\DeclareMathOperator{\spn}{span}

\title{Colorful fractional Helly theorem via weak saturation} 
\author{
Debsoumya Chakraborti\thanks{Mathematics Institute, University of Warwick, Coventry, United Kingdom, \texttt{debsoumya.chakraborti@warwick.ac.uk}. D.~Chakraborti was supported by the European Research Council (ERC) under the European Union Horizon 2020 research and innovation programme (grant agreement No. 947978).}
\and
Minho Cho\thanks{Extremal Combinatorics and Probability Group, Institute for Basic Science, Daejeon, Republic of Korea, \texttt{minhocho@ibs.re.kr}. M.~Cho was supported by the Institute for Basic Science (IBS-R029-C4).} 
\and 
Jinha Kim\thanks{Department of Mathematics, Chonnam National University, Gwangju, Republic of Korea, \texttt{jinhakim@jnu.ac.kr}.}~\thanks{Discrete Mathematics Group, Institute for Basic Science, Daejeon, Republic of Korea. J.~Kim was supported by the Institute for Basic Science (IBS-R029-C1).} 
\and 
Minki Kim\thanks{Division of Liberal Arts and Sciences, GIST, Gwangju, Republic of Korea, \texttt{minkikim@gist.ac.kr}. M.~Kim was supported by Basic Science Research Program through the National Research Foundation of Korea (NRF) funded by the Ministry of Education (NRF-2022R1F1A1063424) and by GIST Research Project grant funded by the GIST in 2024.}
}

\begin{document}

\maketitle

\begin{abstract}
Two celebrated extensions of the classical Helly's theorem are the fractional Helly theorem and the colorful Helly theorem. 
Bulavka, Goodarzi, and Tancer recently established the optimal bound for the unified generalization of the fractional and the colorful Helly theorems using a colored extension of the exterior algebra. 
In this paper, we combinatorially reduce both the fractional Helly theorem and its colorful version to a classical problem in extremal combinatorics known as {weak saturation}. 
No such results connecting the fractional Helly theorem and weak saturation are known in the long history of literature. 
These reductions, along with basic linear algebraic arguments for the reduced weak saturation problems, let us give new short proofs of the optimal bounds for both the fractional Helly theorem and its colorful version without using exterior algebra. 
\end{abstract}

\section{Introduction}
We explore the intriguing connection between two classical problems, one from discrete geometry and the other from extremal combinatorics. 
The problem in discrete geometry is the fractional Helly theorem which asks for the maximal size of an intersecting subfamily in a given finite family of convex sets in $\mathbb{R}^d$ when the density of intersecting $(d+1)$-tuples is positive.
The problem in extremal combinatorics for a given (hyper)graph $F$, called \emph{weak saturation of $F$}, asks for the minimum number of (hyper)edges in an $n$-vertex (hyper)graph $H$ such that the non-edges of $H$ can be added one by one such that a new copy of $F$ is created at every step.
In the 80s, a powerful algebraic technique known as \emph{exterior algebra} was developed to resolve both the fractional Helly theorem and the weak saturation problem for complete (hyper)graphs. 
Since then, exterior algebra has been influential and used numerous times for problems of similar nature. 
For both problems, proving tight bounds withstands combinatorial methods and requires algebraic or geometric arguments. Thus, it has been an open question whether purely combinatorial solutions exist for these problems. 
Although similar algebraic approaches work for both problems, to the best of our knowledge, there have been no results relating these two problems. 
We bridge this gap by reducing the fractional Helly problem to a weak saturation problem in a purely combinatorial way. 
Before diving into our results, we briefly outline some more history of these problems.

\subsection{Colorful and fractional Helly theorems}
Helly's theorem, now a century old, asserts that a finite number of convex sets in $\mathbb{R}^d$ have a point in common if and only if every $d+1$ or fewer of them have a point in common.
Inspired by the features of this theorem, results that derive global intersection properties from local intersection patterns are called Helly-type theorems.
For an overview of such problems and results, see, for example, \cite{ALS17, BK22, Tan13}.

Two of the most important Helly-type theorems from a combinatorial perspective are the fractional Helly theorem and the colorful Helly theorem.
Here, a family of nonempty sets is called \emph{intersecting} if all its members have a point in common.
\begin{thm}[Fractional Helly theorem, \cite{Kalai84}]\label{thm:frachel}
    Let $F$ be a finite family of convex sets in $\mathbb{R}^d$, and let $r$ be a nonnegative integer such that $d+r \leq \size{F}$.
If $F$ contains no intersecting subfamily of size $d+r+1$, then the number of intersecting $(d+1)$-tuples in $F$ is at most $\binom{\size{F}}{d+1}-\binom{\size{F}-r}{d+1}$.
\end{thm}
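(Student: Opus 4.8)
The plan is to pass from the family $F$ to its nerve and then to reduce the resulting counting problem to the weak saturation number of a complete uniform hypergraph.

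Set $n := \size{F}$ and identify $F$ with $[n]$. Let $\mathcal{N}=\mathcal{N}(F)$ be the nerve of $F$: the simplicial complex on $[n]$ whose faces are exactly the intersecting subfamilies. By Wegner's theorem, the nerve of a finite family of convex sets in $\mathbb{R}^d$ is $d$-collapsible, hence $d$-Leray. The hypothesis that $F$ has no intersecting subfamily of size $d+r+1$ says precisely that $\mathcal{N}$ has no face of size $d+r+1$. The quantity to be bounded is the number $f_d(\mathcal{N})$ of $d$-dimensional faces of $\mathcal{N}$, which is exactly the number of intersecting $(d+1)$-tuples, and one checks immediately that the target bound equals $\binom{n}{d+1}-\binom{n-r}{d+1}=\wsat\!\left(n,K^{(d+1)}_{d+r+1}\right)$, the weak saturation number of the complete $(d+1)$-uniform hypergraph on $d+r+1$ vertices. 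Hence it suffices to prove the purely combinatorial statement: \emph{if $\mathcal{N}$ is a $d$-collapsible complex on $[n]$ with no face of size larger than $d+r$, then $f_d(\mathcal{N})\le\wsat\!\left(n,K^{(d+1)}_{d+r+1}\right)$.}

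To prove this I would build the standard linear-algebraic certificate for the weak saturation lower bound, but in a form tailored to $\mathcal{N}$. To each $(d+1)$-subset $e\in\binom{[n]}{d+1}$ assign a vector $W_e$ in an ambient space built from generic data (generic points and elementary multilinear algebra, with no exterior algebra), arranged so that: (i) for every $(d+r+1)$-subset $S$ and every $e\in\binom{S}{d+1}$ one has $W_e\in\spn\{W_f:f\in\binom{S}{d+1}\setminus\{e\}\}$, i.e.\ every copy of $K^{(d+1)}_{d+r+1}$ imposes the expected linear dependence; and (ii) $\dim\spn\{W_e:e\in\binom{[n]}{d+1}\}=\binom{n}{d+1}-\binom{n-r}{d+1}$. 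Verifying (i) and (ii) is exactly the ``reduced weak saturation problem'', and it is dispatched by a direct computation with the generic coordinates; this is where the new proof replaces the exterior-algebra machinery.

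The heart of the argument — and the step I expect to be the main obstacle — is to use the $d$-collapsibility of $\mathcal{N}$ to show that the face-vectors $\{W_e:e\in\binom{[n]}{d+1},\ e\in\mathcal{N}\}$ are linearly \emph{independent}; granting this, independence forces $f_d(\mathcal{N})\le\dim\spn\{W_e:e\in\binom{[n]}{d+1}\}=\binom{n}{d+1}-\binom{n-r}{d+1}$, which is what we want. I would prove independence by running a $d$-collapsing sequence of $\mathcal{N}$ in reverse: start from the void complex and repeatedly perform a reverse elementary $d$-collapse, which adjoins a set $\sigma$ with $\size{\sigma}\le d$ together with the part of a single maximal face $\tau$ of $\mathcal{N}$ lying above $\sigma$. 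At each reverse step one must check that the newly created $(d+1)$-faces contribute vectors escaping the span of the previously adjoined ones, so that independence is maintained throughout. The key leverage is that such a step only adds faces inside the one maximal face $\tau$, and $\size{\tau}\le d+r$, so no reverse step ever completes a copy of $K^{(d+1)}_{d+r+1}$ among the adjoined faces; combining this with the local structure of an elementary $d$-collapse and the genericity of the $W_e$ should yield the needed local independence. Pinning down this local independence claim precisely — and fixing the generic data once and for all so that it works along every admissible collapsing sequence — is the technical crux of the proof.
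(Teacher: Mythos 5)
The crux of your plan --- that the face-vectors $\{W_e : e\in\mathcal{N},\ \size{e}=d+1\}$ can be shown linearly independent by running the $d$-collapse in reverse and noting that ``no reverse step ever completes a copy of $K^{(d+1)}_{d+r+1}$ among the adjoined faces'' --- does not work as stated, and it is not a technicality that can be dispatched by genericity. Your vectors live in a space of dimension $\binom{n}{d+1}-\binom{n-r}{d+1}$, so they satisfy a huge number of linear dependencies that have nothing to do with $(d+r+1)$-cliques: \emph{any} collection of more than $\binom{n}{d+1}-\binom{n-r}{d+1}$ of them is dependent, regardless of whether it contains a copy of $K^{(d+1)}_{d+r+1}$. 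A weak-saturation certificate only guarantees that the span does not grow when an edge completing a copy is added; it gives no independence statement for copy-free (or collapse-structured) families. So clique-avoidance plus genericity cannot be the mechanism; the independence of face-vectors of a $d$-collapsible complex of dimension at most $d+r-1$ is exactly Kalai's theorem, whose known proofs go through exterior algebra or the skew Bollob\'as set-pair inequality --- i.e., the step you have deferred as ``the technical crux'' is the entire theorem, and your sketch contains no argument for it. (A secondary, smaller gap: you also assert without proof that a certificate satisfying your (i) and (ii) for the clique pattern $K^{(d+1)}_{d+r+1}$ can be built by ``elementary multilinear algebra''; this is the Frankl--Kalai bound and is itself not a one-line computation.)

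Note also that the identification of the target bound with $\wsat\bigl(K^{(d+1)}_n;K^{(d+1)}_{d+r+1}\bigr)$ is only a numerical coincidence and does not yield the reduction the paper uses: if you tried the natural saturation statement on the complement (non-faces weakly $K^{(d+1)}_{d+r+1}$-saturated), the resulting inequality would bound $f_d(\mathcal{N})$ by $\binom{n-r}{d+1}$, which is false in general. The paper sidesteps your obstacle by working with the $d$-dimensional \emph{non-faces} and a different pattern, the multipartite ``star'' $K^{(d+1)}_{1,\dots,1,n-d-r+1}$: when $(\sigma_j,\tau_j)$ is collapsed, every $\{v\}\cup\sigma_j$ with $v\notin\tau_j$ is already a non-face (and $\size{\tau_j}\le d+r$), so each newly created non-face $\{w\}\cup\sigma_j$, $w\in\tau_j\setminus\sigma_j$, completes a copy of that star; the span lemma with a simple symmetric-power (polynomial) certificate then gives the lower bound $\binom{n-r}{d+1}$ on the number of non-faces, i.e.\ the desired upper bound on $f_d(\mathcal{N})$. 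If you want to salvage your route, you would have to prove Kalai's independence theorem by elementary means, which is a substantially harder task than the reduction above.
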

\begin{thm}[Colorful Helly theorem, \cite{Ba}]\label{thm:colhel}
    Let $F_1,\ldots,F_{d+1}$ be finite families of convex sets in~$\mathbb{R}^d$.
    Suppose that for every choice $A_i \in F_i$ with $i\in [d+1]$, the intersection $\bigcap_{i=1}^{d+1}A_i$ is nonempty.
    Then, there exists $i\in [d+1]$ such that $F_i$ is intersecting.
\end{thm}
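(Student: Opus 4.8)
The plan is to argue by contradiction: suppose that none of $F_1,\dots,F_{d+1}$ is intersecting, and fix a counterexample minimizing $\sum_{i=1}^{d+1}\size{F_i}$. The first step is to reduce to a rigid configuration. I would observe that each $F_i$ must be \emph{critically} non-intersecting, i.e.\ $\bigcap F_i=\emptyset$ while every proper subfamily is intersecting: otherwise one could delete a removable set from some $F_i$ and obtain a strictly smaller counterexample, since deleting a set only destroys colorful tuples and affects no other colour class. Helly's theorem then forces $\size{F_i}\le d+1$, and for each $A\in F_i$ gives a witness point $p_A\in\bigcap(F_i\setminus\{A\})$ with $p_A\notin A$. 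Intersecting every set with a large closed ball containing all these $p_A$ and one common point of each colorful tuple preserves all the relevant data, so I may also assume every set is compact; hence each $p_A$ is \emph{strictly} separated from $A$ by a hyperplane.

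To see the mechanism, consider first the model case $\size{F_i}=2$ for all $i$, say $F_i=\{B_i^{+},B_i^{-}\}$ with a hyperplane $\ell_i$ strictly separating $B_i^{+}$ from $B_i^{-}$, with $B_i^{\pm}$ on its $\pm$-side. For each sign pattern $\eps\in\{+,-\}^{d+1}$ the tuple $(B_1^{\eps_1},\dots,B_{d+1}^{\eps_{d+1}})$ is colorful, so by hypothesis it has a common point $q_\eps$, and $q_\eps$ lies strictly on the $\eps_i$-side of $\ell_i$ for every $i$. Thus the points $q_\eps$ occupy $2^{d+1}$ pairwise distinct cells of the arrangement of $\ell_1,\dots,\ell_{d+1}$. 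But an arrangement of $d+1$ hyperplanes in $\mathbb{R}^d$ has at most $\sum_{k=0}^{d}\binom{d+1}{k}=2^{d+1}-1$ cells, which is the desired contradiction.

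The general case is where the real work lies, because when $\size{F_i}\ge 3$ no single hyperplane separates the members of $F_i$ — indeed every two of them intersect — so the argument above does not apply directly. Here I would first record the simplex structure of a critical family: a one-line Radon argument shows that the witnesses $p_{i,1},\dots,p_{i,k_i}$ are affinely independent and span a simplex $\Delta_i$ whose facet opposite $p_{i,j}$ is contained in the set $B_{i,j}$ it witnesses, while $p_{i,j}\notin B_{i,j}$. Equivalently, for each $i$ and $j$ there is a closed halfspace $H_{i,j}\supseteq B_{i,j}$ with $p_{i,l}\in H_{i,j}$ for all $l\ne j$ but $p_{i,j}\notin H_{i,j}$. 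The colorful hypothesis then forces every rainbow family $H_{1,j_1},\dots,H_{d+1,j_{d+1}}$ to have a common point, and one should be able to derive a contradiction from this by a Farkas/Radon-type argument applied to these $d+1$ halfspaces in $\mathbb{R}^d$, generalizing the cell count of the model case. The main obstacle I anticipate is organizing this last argument so that it treats all the mixed sizes $2\le\size{F_i}\le d+1$ uniformly; this is precisely the point at which Bárány's original proof brings in its combinatorial-geometric input. Alternatively, one may simply note that this statement is the extreme case of the colorful fractional Helly theorem that is reproved in this paper.
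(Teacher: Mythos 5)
Your argument is incomplete: only the model case $\size{F_i}=2$ is actually proved. The preliminary reductions are fine (criticality of each colour class, $\size{F_i}\le d+1$ by Helly, compactness, affine independence of the witness points via Radon, and the halfspaces $H_{i,j}\supseteq B_{i,j}$ separating $p_{i,j}$), and the cell-counting contradiction for the all-pairs case is correct, since the $2^{d+1}$ points $q_\eps$ would have to lie in distinct cells of an arrangement of $d+1$ hyperplanes in $\mathbb{R}^d$, which has at most $2^{d+1}-1$ cells. But the step you defer --- ``a Farkas/Radon-type argument applied to these halfspaces'' --- is exactly the theorem, and the abstraction you propose loses the information needed to get a contradiction: from the rainbow-intersection property of the halfspaces alone nothing false follows, because within a colour class the halfspaces $H_{i,1},\dots,H_{i,k_i}$ may very well have a common point even though $\bigcap_j B_{i,j}=\emptyset$ (they only contain the $B_{i,j}$), and the colourful common points $q$ of the original sets have unknown position relative to the non-chosen halfspaces $H_{i,l}$, $l\ne j_i$, so the sign-vector/cell count does not extend. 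You acknowledge this yourself; as it stands the general (and even mixed-size) case is a genuine gap, not a routine generalization.

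Your closing remark is, however, essentially the paper's own route: here the colorful Helly theorem is not proved directly but obtained as the degenerate case of the colorful fractional Helly theorem (\Cref{thm:colfrachel}), which the paper reproves via the reduction to directed weak saturation. Concretely, pass to the nerve of $F_1\cup\cdots\cup F_{d+1}$, which is $d$-collapsible by Wegner's theorem; if no $F_i$ were intersecting, then $\dim(K[V_i])\le n_i-2$, so applying \Cref{thm:colfrachel} with $r_i=n_i-1$ bounds the number of rainbow faces of size $d+1$ by $\prod_i n_i-1$, contradicting the hypothesis that all $\prod_i n_i$ rainbow $(d+1)$-tuples are intersecting (this is the content of \Cref{thm:colorfracHelly}). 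If you intend to rely on that implication, you should carry out this short deduction rather than leave it as an aside; the direct geometric attack, to be a proof, would need the missing combinatorial-geometric argument from B\'ar\'any's paper.
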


The proof of Theorem~\ref{thm:frachel} by Kalai uses the exterior algebra. Alternative proofs are presented in \cite{AlonKalai,EckhoffUBT}. In particular, the proof in \cite{AlonKalai} uses the Bollob\'as set-pair theorem, which allows us to prove Theorem~\ref{thm:frachel} using linear algebra instead of exterior algebra.
All proofs are based on Wegner's observation~\cite{Wegner} that for a finite family $F$ of convex sets in $\mathbb{R}^d$, its {\em nerve}, which is a simplicial complex on $F$ whose simplices are exactly the intersecting subfamilies of $F$, is $d$-collapsible.

Given an abstract simplicial complex $K$ on a finite vertex set, an {\em elementary $d$-collapse} in $K$ is a process of taking a face $\sigma$ of size at most $d$ that is contained in a unique maximal face and deleting all faces containing $\sigma$.
A simplicial complex is {\em $d$-collapsible} if one can remove all faces by a finite sequence of elementary $d$-collapses.
With the notion of $d$-collapsibility, one can reformulate the fractional and the colorful Helly theorems as follows.

\begin{thm}\label{thm:helly_d-collapsible}
    Let $K$ be a $d$-collapsible complex on $V$ with $\size{V} = n$.
    \begin{thmlist}
        \item\label[theorem]{thm:fracHelly} {\normalfont (Fractional Helly theorem)} If $\dim(K) \leq d+r-1$, then the number of $d$-dimensional faces of $K$ is at most $\binom{n}{d+1}-\binom{n-r}{d+1}$. 
        {\normalfont (Recall that $\dim\sigma:=\size{\sigma}-1$ and $\dim(K):=\max_{\sigma \in K}\dim\sigma$.)}
        \item\label[theorem]{thm:colorfracHelly} {\normalfont (Colorful Helly theorem)} Consider a partition $V = V_1 \cup  \cdots\cup  V_{d+1}$. If all rainbow sets {\normalfont (the sets containing at most one element from each $V_i$)} of size $d+1$ are faces of $K$, then some $V_i$ is a face of~$K$.
    \end{thmlist}
\end{thm}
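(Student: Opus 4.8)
The plan is to prove both parts by induction on the number of elementary $d$-collapses needed to trivialize $K$, tracking how the relevant quantity changes at each collapse step. An elementary $d$-collapse picks a face $\sigma$ with $\size{\sigma}\le d$ that lies in a unique maximal face $\tau$, and deletes all faces in the interval $[\sigma,\tau]:=\{\rho : \sigma\subseteq\rho\subseteq\tau\}$. The key bookkeeping observation is this: the $d$-dimensional faces removed in this step are exactly the $\rho$ with $\sigma\subseteq\rho\subseteq\tau$ and $\size{\rho}=d+1$; writing $s=\size\sigma$ and $t=\size\tau$, there are $\binom{t-s}{d+1-s}$ of them. In particular, a single collapse removes at least one $d$-face (take $\rho\supseteq\sigma$ of size $d+1$ inside $\tau$, which exists since $\size\tau\ge\size\sigma$ and — because $K$ has a $d$-face to begin with, or more carefully because the last collapse to touch a given $d$-face must do so) — so the number of collapses is at least the number of $d$-faces.

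First I would set up \Cref{thm:fracHelly}. Since $\dim(K)\le d+r-1$, every maximal face $\tau$ has $\size\tau\le d+r$. Order the elementary $d$-collapses as $(\sigma_1,\tau_1),\dots,(\sigma_N,\tau_N)$. Each $d$-face of $K$ is removed in exactly one collapse step, so the number of $d$-faces equals $\sum_{j}\binom{t_j-s_j}{d+1-s_j}$ where $s_j=\size{\sigma_j}$, $t_j=\size{\tau_j}\le d+r$. The idea is that a step which destroys many $d$-faces must also ``use up'' many vertices, and one should instead count via the collapses more cleverly: I would associate to the $j$-th collapse the face $\sigma_j$ together with the at most $r$ vertices $\tau_j\setminus\sigma_j$ (note $\size{\tau_j\setminus\sigma_j}=t_j-s_j\le d+r-s_j$, but we only need $\le r$ when $s_j\ge d$, and in general $\binom{t_j-s_j}{d+1-s_j}\le\binom{t_j-s_j}{r-1}\cdot(\text{something})$…). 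The cleanest route, which I would pursue, is the following injective/counting argument: build an injection from the collapses into the set of pairs $(\rho, W)$ where $\rho$ is a $d$-set and $W$ is an $r$-set with $\rho\cap W$ determined suitably, or more simply argue that $\sum_j\binom{t_j-s_j}{d+1-s_j}$ is maximized, subject to the constraint that the $\sigma_j$ with the $d$-faces they kill cover each $d$-face once, precisely by the ``shifted'' complex whose maximal faces are the sets $[n]\setminus\{n-r+1,\dots,n\}$-cofaces — giving the extremal count $\binom n{d+1}-\binom{n-r}{d+1}$. I would make this precise by the standard reformulation: $\binom n{d+1}-\binom{n-r}{d+1}$ counts exactly the $(d+1)$-subsets of $[n]$ that meet $\{1,\dots,r\}$, and then exhibit an injection from the $d$-faces of $K$ into this family, built greedily along the reverse collapse order (when $(\sigma_j,\tau_j)$ is collapsed, each killed $d$-face $\rho$ gets sent to a $(d+1)$-set formed from $\rho$ by swapping in a fresh element of a fixed $r$-element ``pivot'' set, using that $\rho\supseteq\sigma_j$ lies in a unique maximal face so the map is reversible).

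For \Cref{thm:colorfracHelly} I would argue by contradiction: suppose no $V_i$ is a face of $K$, yet every rainbow $(d+1)$-set is a face. Consider the last collapse $(\sigma_N,\tau_N)$; at that point only the single maximal face $\tau_N$ (and its subsets) remain, and $\dim K$ being at most $d+r-1$ here is replaced by the rainbow hypothesis. The crucial point is that under the rainbow hypothesis $K$ contains every rainbow $(d+1)$-set, so when we run the collapses, the faces $\sigma_j$ of size $\le d$ that get used must eventually include rainbow $d$-sets, and a rainbow $d$-set $\sigma$ contained in a unique maximal face $\tau$ forces $\tau$ to contain, for each color class $V_i$ not already hit by $\sigma$, at most one vertex (else two rainbow $(d+1)$-sets extending $\sigma$ would contradict uniqueness) — iterating, $\tau$ itself must be rainbow, hence $\size\tau\le d+1$, hence $\tau=\sigma$ is already maximal of size $d$, and then some $V_i$ with $\size\sigma\cap V_i=\emptyset$: picking any $v\in V_i$ gives a rainbow $(d+1)$-face $\sigma\cup\{v\}\supsetneq\sigma$, contradicting that $\sigma$ is a maximal face. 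Chasing this shows no valid collapse sequence exists unless some $V_i\in K$.

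\textbf{Main obstacle.} The routine part is the per-step face count $\binom{t_j-s_j}{d+1-s_j}$; the genuinely delicate step is the optimization/injection in \Cref{thm:fracHelly} — showing that, over all $d$-collapsible complexes on $n$ vertices of dimension $\le d+r-1$, the sum of these per-step counts is maximized by the ``cone'' construction with value $\binom n{d+1}-\binom{n-r}{d+1}$. Making the greedy injection along the reverse collapse order well-defined (in particular, that the ``pivot'' swap always lands on a $(d+1)$-set not yet used and meeting the fixed $r$-set) is where the uniqueness of the maximal coface is used essentially, and I expect that to be the crux of the argument.
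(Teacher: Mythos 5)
For \Cref{thm:fracHelly}, everything in your sketch rests on the ``greedy injection along the reverse collapse order'', which you never construct and yourself flag as the crux; as written this is a restatement of the difficulty rather than a proof. The per-step identity (the $j$-th collapse kills exactly $\binom{t_j-s_j}{d+1-s_j}$ faces of dimension $d$) carries no information by itself, since the number of collapse steps is not bounded, and your parenthetical claim that every collapse removes at least one $d$-face is false (a maximal face of size at most $d$ is collapsed without touching any $d$-face). The paper's whole point is that no purely combinatorial argument for the tight count is known: it obtains the bound by showing (\Cref{thm:weaksat}) that the $d$-dimensional non-faces of $K$ form a weakly $K^{(d+1)}_{\bfm}$-saturated hypergraph with $\bfm=(1,\dots,1,n-d-r+1)$ --- here uniqueness of $\tau_j$ is used to see that $\{v\}\cup\sigma_j$ is a non-face for every $v\notin\tau_j$, and $\dim K\le d+r-1$ guarantees at least $n-d-r$ such vertices --- and then proving $\wsat\big(K^{(d+1)}_n;K^{(d+1)}_{\bfm}\big)=\binom{n-r}{d+1}$ (\Cref{thm:weaksatnumber}) via Lemma~\ref{lemma} applied to products of $d+1$ general-position linear forms. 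Your proposed injection would in particular answer the open question alluded to in the introduction, and nothing in the sketch explains why the ``swapped'' $(d+1)$-sets are pairwise distinct, which is exactly where the tight bound lives. So this is a genuine gap, not a routine detail.

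For \Cref{thm:colorfracHelly}, the central deduction is incorrect: if $\tau$ contains two vertices $u,v$ of the colour class missed by the rainbow $d$-set $\sigma$, then $\sigma\cup\{u\}$ and $\sigma\cup\{v\}$ are both faces \emph{contained in} $\tau$, which is perfectly compatible with $\tau$ being the unique maximal face containing $\sigma$; uniqueness forbids cofaces of $\sigma$ lying outside $\tau$, not extra vertices inside $\tau$. Hence ``$\tau$ is rainbow'' does not follow, and neither does $\tau=\sigma$. Your final contradiction also applies the hypothesis ``every rainbow $(d+1)$-set is a face'' inside an intermediate complex $K_{j-1}$, where it generally fails (rainbow faces are removed by earlier collapses), and the claim that some collapsed $\sigma_j$ must be a rainbow $d$-set is never justified. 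The correct way to exploit uniqueness is the opposite one, used in the paper: for every $v$ in the missed class with $v\notin\tau_j$, the set $\{v\}\cup\sigma_j$ is a \emph{non}-face of $K_{j-1}$. This makes the rainbow non-faces of $K$ a directed weakly saturated hypergraph with respect to the stars $K^{(d+1)}_{\bfm_i}$ (\Cref{thm:weaksatfamily}, taken with $r_i=n_i-1$ when no $V_i$ is a face), and since the first edge added in such a process cannot lie in a two-edge star, $K$ must already have a rainbow non-face of size $d+1$ --- contradicting the hypothesis and proving part (2) without your case analysis.
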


A unified generalization of the fractional and the colorful Helly theorem was first qualitatively presented by B\'ar\'any, Fodor, Montejano, Oliveros, and P\'{o}r~\cite{BaranyCFHelly}, and was given robustness by Kim~\cite{Kim}. More recently, the quantitatively tight version, conjectured by Kim~\cite{Kim}, was proved by Bulavka, Goodarzi, and Tancer~\cite{OptimalCFHelly}. Here is the statement of optimal colorful fractional Helly theorem for $d$-collapsible complexes.
For optimal constructions, see~\cite{OptimalCFHelly,Kim}.

\begin{thm}[Colorful fractional Helly theorem, \cite{OptimalCFHelly}] \label{thm:colfrachel}
Let $V$ be a set with a partition $V = V_1 \cup \cdots\cup V_{d+1}$ such that $\size{V_i} = n_i\ge r_i$ for every $i\in [d+1]$.
If $K$ is a $d$-collapsible simplicial complex on~$V$ with $\dim(K[V_i]) \le r_i - 1$ for every $i\in [d+1]$, then the number of rainbow faces of size $d+1$ is at most $\prod_{i} n_i - \prod_{i} (n_i - r_i)$.
\end{thm}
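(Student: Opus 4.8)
The plan is to reduce Theorem~\ref{thm:colfrachel} to a weak saturation statement for the complete $(d+1)$-partite $(d+1)$-uniform hypergraph and to settle that statement by elementary linear algebra, avoiding the colored exterior algebra of~\cite{OptimalCFHelly}. A rainbow face of size $d+1$ is a transversal $e=\{a_1,\dots,a_{d+1}\}$ with $a_i\in V_i$, so there are $\prod_i n_i$ of them and it suffices to show that at least $\prod_i(n_i-r_i)$ transversals are \emph{not} faces of $K$ (we may assume every $n_i\ge r_i+1$, else the bound is trivial). Let $F$ be the complete $(d+1)$-partite $(d+1)$-uniform hypergraph with part sizes $r_1+1,\dots,r_{d+1}+1$. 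Fixing $R_i\subseteq V_i$ with $|R_i|=r_i$, the transversals meeting $\bigcup_i R_i$ form a weakly $F$-saturated hypergraph of size $\prod_i n_i-\prod_i(n_i-r_i)$: a transversal $e$ disjoint from $\bigcup_i R_i$, once added, completes a copy of $F$ on $\prod_i(R_i\cup\{a_i\})$, every other transversal of which meets some $R_i$. So the bound in Theorem~\ref{thm:colfrachel} is exactly $\wsat$ for this partite problem, and the task is to prove the matching lower bound in a form usable for $d$-collapsible complexes.

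I would encode that lower bound as a linear-algebraic certificate: attach to each transversal $e$ a vector $w_e$ in a vector space $W$ with $\dim W=\prod_i n_i-\prod_i(n_i-r_i)$, produced generically from data attached to the vertices of the $V_i$ (for instance generic points associated with the elements of each $V_i$, combined through tensor/incidence-matrix-type expressions rather than through an exterior algebra), so that the $w_e$ span $W$, inside each copy of $F$ on a product $\prod_i A_i$ with $|A_i|=r_i+1$ the corresponding vectors satisfy a linear relation with all coefficients nonzero, and the kernel of $e\mapsto w_e$ is spanned by exactly these relations. Pinning $\dim W$ to precisely $\prod_i n_i-\prod_i(n_i-r_i)$ — large enough for the collapse argument below but no larger — is the heart of the linear algebra and is where genericity is spent. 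The model case $d=1$, $r_1=r_2=1$ already shows the shape of things: there $W=\bR^{V}/\langle\mathbf 1\rangle$, $w_{\{a,b\}}=e_a-e_b$, $F=C_4$, and the whole argument degenerates to ``a forest has at most $n-1$ edges''; I would build the general $w_e$ as the multicolor, higher-dimensional analogue of $e_a-e_b$.

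For the reduction, fix a $d$-collapse of $K$ and run it in reverse: starting from the empty complex, re-insert the intervals $[\sigma_t,\tau_t]$ of the elementary collapses in reverse order until $K$ is rebuilt. When the step re-inserting $[\sigma,\tau]$ occurs, the current complex $K'$ does not contain $\sigma$, hence no face of $K'$ lies above $\sigma$, so $\spn\{w_f:f\text{ a rainbow }d\text{-face of }K'\}\subseteq\spn\{w_f:\sigma\not\subseteq f\}$; and the rainbow $d$-faces this step newly creates are precisely the rainbow $(d+1)$-sets $e$ with $\sigma\subseteq e\subseteq\tau$. The engine is a step lemma: whenever $|\sigma|\le d$ and $\sigma\subseteq\tau$ with $|\tau\cap V_i|\le r_i$ for all $i$ — which holds here because $\tau\cap V_i$ is a face of $K[V_i]$ and $\dim K[V_i]\le r_i-1$ — the vectors $\{w_e:\sigma\subseteq e\subseteq\tau,\ e\text{ rainbow}\}$ are linearly independent modulo $\spn\{w_f:\sigma\not\subseteq f\}$. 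Granting this, each elementary collapse step raises $\dim\spn\{w_f:f\text{ a rainbow }d\text{-face of the current complex}\}$ by exactly the number of rainbow $d$-faces it creates, and summing over all steps yields
\[
\#\{\text{rainbow }d\text{-faces of }K\}=\dim\spn\{w_e:e\text{ a rainbow }d\text{-face of }K\}\le\dim W=\prod_i n_i-\prod_i(n_i-r_i).
\]

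The step lemma is exactly where the hypothesis $|\tau\cap V_i|\le r_i$ is used, and I expect it — together with pinning down the certificate of the second paragraph — to be the main obstacle: since $\tau$ has at most $r_i<r_i+1$ vertices in $V_i$ for every $i$, no copy of $F$ has all its transversals inside $\tau$, and from the explicit description of the kernel one must deduce that no combination of the $F$-relations is supported on $\{e:\sigma\subseteq e\subseteq\tau\}\cup\{f:\sigma\not\subseteq f\}$ with a nonzero part on the first set; for the generic certificate this should reduce to the non-vanishing of an explicit determinant. The reverse-collapse bookkeeping is then routine, and specializing the parameters (for instance taking $r_i=n_i-1$ for all $i$) recovers the colorful Helly theorem as a consistency check.
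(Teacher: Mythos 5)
Your overall skeleton (reverse the $d$-collapse, use $\dim(K[V_i])\le r_i-1$ to force $\size{\tau\cap V_i}\le r_i$, and turn each elementary collapse step into a linear-algebra statement) is sound, but the proof has a genuine gap exactly at its core: the certificate $(W,\{w_e\})$ with $\dim W=\prod_i n_i-\prod_i(n_i-r_i)$, with kernel spanned by one full-support relation per copy of the complete $(d+1)$-partite $(d+1)$-graph with parts of sizes $r_1+1,\dots,r_{d+1}+1$, together with the ``step lemma'' that $\{w_e:\sigma\subseteq e\subseteq\tau,\ e\ \text{rainbow}\}$ is independent modulo $\spn\{w_f:\sigma\not\subseteq f\}$, is never constructed or proved --- you yourself defer it as ``the main obstacle''. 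This is not a routine genericity argument. If you assign generic vectors $q(v)\in\bR^{n_i}$ to the vertices of $V_i$, the rank-one tensors $\bigotimes_i q(a_i)$ satisfy no relations at all (they span a space of dimension $\prod_i n_i$); if you instead use generic vectors in a smaller space $\bR^{m_i}$, the relations you get are among $m_i+1$ like-coloured vertices, which is the ``dual'' family of relations, not the $F$-relations with parts $r_i+1$ that your step lemma needs. Producing a space of dimension exactly $\prod_i n_i-\prod_i(n_i-r_i)$ in which the rainbow faces of every $d$-collapsible complex with the stated dimension restrictions become linearly independent is precisely the content of the colored exterior algebra of Bulavka--Goodarzi--Tancer; your $d=1$ model case $w_{\{a,b\}}=e_a-e_b$ does not indicate how to achieve this in general, so as written the proposal is a plan whose missing lemma is the hard part of the theorem.

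It is worth noting that the paper deliberately avoids your ``primal'' direction. Instead of showing the rainbow faces are independent in a space of the complementary dimension, it shows that the rainbow \emph{non-faces} form a directed weakly saturated hypergraph in $K^{(d+1)}_{\bfn}$ with respect to the family $\{K^{(d+1)}_{\bfm_1},\dots,K^{(d+1)}_{\bfm_{d+1}}\}$, where $\bfm_i=(1,\dots,n_i-r_i+1,\dots,1)$: at a collapse step $(\sigma,\tau)$ with $\sigma$ rainbow and missing colour $i$, every $v\in V_i\setminus\tau$ already yields a non-face $\{v\}\cup\sigma$ (since $\tau$ is the unique maximal face over $\sigma$), and $\size{\tau\cap V_i}\le r_i$, so each newly created non-face $\{w\}\cup\sigma$ with $w\in\tau\cap V_i$ completes such a copy. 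The matching lower bound $\w\bigl(K^{(d+1)}_{\bfn};\{K^{(d+1)}_{\bfm_i}\}\bigr)\ge\prod_i(n_i-r_i)$ then follows from a one-line certificate: assign to each $v\in V_i$ a generic linear form in $n_i-r_i$ variables and to each transversal the product of its forms; any $n_i-r_i+1$ forms of one colour are dependent, which is exactly the hypothesis of Lemma~\ref{lemma}. This dual counting is what makes the linear algebra trivial. To complete your route you would have to supply a concrete construction of $W$ and a proof of the step lemma, which amounts to re-deriving the exterior-algebra machinery you set out to avoid; alternatively, redirect your reverse-collapse bookkeeping at the non-faces as above, at which point it becomes the paper's argument.
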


In order to prove Theorem~\ref{thm:colfrachel}, Bulavka, Goodarzi, and Tancer introduced the colored version of exterior algebra, and they actually gave an optimal upper bound on the number of faces $\sigma$ in $K$ with $\size{\sigma \cap V_i} = k_i$ for given nonnegative integers $k_1, \ldots, k_{d+1}$.

\subsection{Weak saturation and related problems}

Let $F$, $G$, and $H$ be hypergraphs such that $H\subseteq G$ (i.e., $H$ is a subhypergraph of $G$).
We say that $H$ is \emph{weakly $F$-saturated in $G$} if there exists an ordering $e_1, \ldots, e_m$ of the edges in $E(G) \setminus E(H)$ so that for every $k \in [m]$, the hypergraph obtained from $H$ after adding the edges $e_1, \ldots, e_k$, denote by $H \cup \{e_1, \ldots, e_k\}$, has a copy of $F$ containing $e_k$.
We call $G$ the \emph{host (hyper)graph} for the weak saturation.
We omit the host graph whenever it is clear from the context.
The sequence of edges $e_1, \ldots, e_m$ is called a \emph{weak $F$-saturation sequence} of $H$.
The minimum number of edges in a hypergraph $H$ that is weakly $F$-saturated in $G$ is called the \emph{weak $F$-saturation number} in $G$ and is denoted by $\wsat(G; F)$.

We next consider the \emph{directed} variant of weak saturation.
We call an $r$-uniform hypergraph simply an \emph{$r$-graph}.
For $r$-partite $r$-graphs $F$ and $H$, and with a given fixed partition $(U_1, \ldots, U_r)$ of $V(F)$ and $(V_1, \ldots, V_r)$ of $V(H)$, a copy of $F$ in $H$ is called \emph{directed} if the corresponding embedding ${\varphi:V(F) \rightarrow V(H)}$ satisfies $\varphi(U_i) \subseteq V_i$ for all $i \in [r]$.
Let $G$ and $H$ be $r$-partite $r$-graphs with a given vertex partition $(V_1, \ldots, V_r)$ such that $H\subseteq G$.
Let $F$ be another $r$-partite $r$-graph with a given vertex partition $(U_1, \ldots, U_r)$.
Now, a weak $F$-saturation sequence $e_1, \ldots, e_m$ of~$H$ is called \emph{directed} if for every $k \in [m]$, there is a directed copy of $F$ using $e_k$ in $H \cup \{e_1, \ldots, e_k\}$. 
Similar to before, the minimum number of edges in a directed weakly $F$-saturated hypergraph $H$ is called the \emph{directed weak $F$-saturation number} in $G$, and we denote this number by~$\w(G; F)$.

Weak saturation was first introduced by Bollob\'as~\cite{WeakSaturation}.
In the most natural case when both $G$ and $F$ are complete $r$-graphs, the value of $\wsat(G; F)$ is determined independently by Frankl~\cite{Frankl} and Kalai~\cite{Kalai81, Kalai85}.
For $G$ and $F$ both complete $r$-partite $r$-graphs, Alon~\cite{Alon} determined $\w(G; F)$ and
using this result, Moshkovitz and Shapira~\cite{HypergraphSaturation} later determined $\wsat(G; F)$.
Recently, this has been generalized to all complete $q$-partite $r$-graphs $G$ and $F$ with $q \geq r$ by Bulavka, Tancer, and Tyomkyn~\cite{WeakSaturationHypergraph} using the colored exterior algebra mentioned earlier.
Not many values of $\w(G; F)$ and $\wsat(G; F)$ are known for other hypergraphs $G$ and $F$; see, e.g.,~\cite{Bootstrap, MorrisonNoelScott, Pikhurko, ShapiraTyomkyn} for some results.

Now consider a family $\cF = \{F_1, \ldots, F_s\}$ of $r$-graphs.
We say that $H$ is \emph{weakly $\cF$-saturated} in $G$ if there exists an ordering $e_1, \ldots, e_m$ of the edges in $E(G) \setminus E(H)$ so that for every $k \in [m]$, the hypergraph $H \cup \{e_1, \ldots, e_k\}$ contains a copy of some $F_j \in \cF$ using $e_k$.
The weak $\cF$-saturation number in $G$ is defined in a similar way and is denoted by $\wsat(G; \cF)$.
\emph{Directed weak $\cF$-saturation} and the \emph{directed weak $\cF$-saturation number} $\w(G; \cF)$ are defined in the analogous way.

\subsection{Main result}
We point out that the fractional Helly theorem and the colorful fractional Helly theorem actually can be interpreted as a (directed) weak saturation of a family of complete multipartite hypergraphs.
We denote by $K^{(r)}_n$ the complete $r$-graph on $n$ vertices.
For a tuple $\bfn = (n_1, \ldots, n_r)$, we denote by $K^{(r)}_\bfn$ the complete $r$-partite $r$-graph on vertex partition $(V_1, \ldots, V_r)$ with $\size{V_i} = n_i$ for every $i \in [r]$.

\begin{thm}[Reduction of (colorful) fractional Helly to weak saturation] \label{lem:CollapseSaturate}
\mbox{}
\begin{thmlist}
\item \label[theorem]{thm:weaksat}
Let $K$ be a $d$-collapsible simplicial complex on $V$ as in \Cref{thm:helly_d-collapsible}.
Let $H \subseteq K^{(d+1)}_n$ be the $(d+1)$-graph on $V$ whose edges are exactly the $d$-dimensional non-faces of $K$.
Then, $H$ is weakly $K^{(d+1)}_\bfm$-saturated, where $\bfm$ is the $(d+1)$-tuple $(1,\dots,1,n-d-r+1)$.

\item \label[theorem]{thm:weaksatfamily}
Let $K$ and $V = V_1 \cup \cdots \cup V_{d+1}$ be as in \Cref{thm:colfrachel}.
Let $H \subseteq K^{(d+1)}_\bfn$ be the $(d+1)$-partite $(d+1)$-graph whose edges are exactly the rainbow non-faces of size $d+1$ in $K$.
For every $i \in [d+1]$, let $\bfm_i = (1, \ldots, n_i - r_i + 1, \ldots, 1)$ be the $(d+1)$-tuple of integers whose $i$-th entry is $n_i - r_i + 1$ and all other entries are 1.
Then, $H$ is directed weakly $\{K^{(d+1)}_{\bfm_1}, K^{(d+1)}_{\bfm_2}, \ldots, K^{(d+1)}_{\bfm_{d+1}}\}$-saturated.
\end{thmlist}
\end{thm}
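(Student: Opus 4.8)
The plan is to handle both parts at once by writing down an explicit (directed) weak saturation sequence that follows a collapse sequence of $K$ in the \emph{forward} direction. Fix a sequence of elementary $d$-collapses witnessing $d$-collapsibility, $K = K^{(0)} \to K^{(1)} \to \cdots \to K^{(N)}$, where the $\ell$-th collapse deletes all faces containing a face $\sigma_\ell$ with $\abs{\sigma_\ell} \le d$, and these deleted faces all lie inside the unique maximal face $\tau_\ell$ of $K^{(\ell)}$ containing $\sigma_\ell$; hence the faces deleted at step $\ell$ are exactly those $F$ with $\sigma_\ell \subseteq F \subseteq \tau_\ell$, and every $d$-dimensional face of $K$ is deleted at a unique step. (We may assume $K$ has a $d$-dimensional face and, in \Cref{thm:weaksat}, that $n \ge d+r$, as otherwise the statements are vacuous or immediate.)

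For the saturation sequence, process $\ell = 0, 1, \ldots, N-1$ in this order; at ``phase $\ell$'' add, in an arbitrary order, all $(d+1)$-sets $\rho$ with $\sigma_\ell \subseteq \rho \subseteq \tau_\ell$ — required additionally to be rainbow in \Cref{thm:weaksatfamily} — that is, the $d$-dimensional (rainbow) faces of $K$ deleted by the $\ell$-th collapse. Since a face deleted at a step $\ell' < \ell$ is not a face of $K^{(\ell)}$ whereas each such $\rho$ is, nothing is added twice, and over all phases we add precisely the non-edges of $H$ (the $d$-dimensional, respectively $d$-dimensional rainbow, faces of $K$).

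The heart of the argument is to check that adding $\rho$ at phase $\ell$ creates a copy of the relevant complete multipartite hypergraph through $\rho$. Choose $y \in \rho \setminus \sigma_\ell$ (nonempty since $\abs{\sigma_\ell} < d+1$); in \Cref{thm:weaksatfamily} take $y = \rho \cap V_i$ for any index $i$ with $\rho \cap V_i \notin \sigma_\ell$ (which exists as $\sigma_\ell \subsetneq \rho$ and $\rho$ is rainbow). Put $S := \rho \setminus \{y\}$, a $d$-set with $\sigma_\ell \subseteq S$ (and, in the colorful case, one vertex in each $V_j$, $j \ne i$). A copy of $K^{(d+1)}_{\bfm}$ (respectively a directed copy of $K^{(d+1)}_{\bfm_i}$) through $\rho$ amounts to choosing a ``large part'' $T$ (a subset of $V_i$ in the colorful case) with $y \in T$, $\abs{T} = n - d - r + 1$ (respectively $n_i - r_i + 1$), such that $S \cup \{t\}$ is a current edge for every $t \in T$. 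So it suffices to show there are at most $r-1$ (respectively $r_i - 1$) vertices $v \ne y$ with $S \cup \{v\}$ not a current edge. If $S \cup \{v\}$ is not a current edge, then it is a $d$-dimensional face of $K$ not yet added; since $S \cup \{v\} \supseteq \sigma_\ell$ and $K^{(\ell+1)}$ contains no face containing $\sigma_\ell$, it cannot lie in $K^{(\ell+1)}$, so it must be deleted exactly at step $\ell$, forcing $S \cup \{v\} \subseteq \tau_\ell$, i.e.\ $v \in \tau_\ell \setminus S$. The number of such $v$ other than $y$ is at most $\abs{\tau_\ell} - d - 1 \le r-1$ because $\dim K \le d+r-1$; in the colorful case the candidates further lie in $V_i$, so their number other than $y$ is at most $\abs{\tau_\ell \cap V_i} - 1 \le r_i - 1$, because $\tau_\ell \cap V_i$ is a face of $K[V_i]$ and $\dim K[V_i] \le r_i - 1$. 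This produces a valid $T$, and $\rho$ is an edge of the resulting copy since $y \in T$; in \Cref{thm:weaksatfamily} the copy is directed by construction.

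The only substantive points are those in the last paragraph: moving \emph{forward} through the collapses guarantees that every $d$-face deleted at an earlier step is already present; taking $S \supseteq \sigma_\ell$, together with the uniqueness of $\tau_\ell$ and the fact that step $\ell$ deletes \emph{every} face containing $\sigma_\ell$, confines the remaining non-edges $S \cup \{v\}$ to $\tau_\ell$; and the dimension hypotheses are exactly what is needed to bound $\abs{\tau_\ell}$, respectively $\abs{\tau_\ell \cap V_i}$, by the required amount. The remaining bookkeeping — translating ``a (directed) copy through a fixed edge'' into the statement about $S$ and $T$, and trimming $T$ to the prescribed size — is routine.
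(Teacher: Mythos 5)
Your proof is correct and follows essentially the same route as the paper: process the $d$-collapse sequence forward, add at each phase the $d$-dimensional (rainbow) faces deleted by that collapse, and use the uniqueness of $\tau_\ell$ together with $\dim(K)\le d+r-1$ (resp.\ $\dim(K[V_i])\le r_i-1$) to confine the missing edges through $\sigma_\ell$ to $\tau_\ell$ (resp.\ $\tau_\ell\cap V_i$), which yields the required copy of $K^{(d+1)}_{\bfm}$ (resp.\ directed copy of $K^{(d+1)}_{\bfm_i}$). The only cosmetic difference is that the paper normalizes the collapse sequence so that $\size{\sigma_\ell}=d$ and takes the copy centered at $\sigma_\ell$ itself, while you allow $\size{\sigma_\ell}\le d$ and argue contrapositively about the ``bad'' vertices, which changes nothing essential.
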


The weak saturation number for each situation is given as follows.
\begin{prop}[Weak saturation numbers] \label{thm:weak sat} 
\mbox{}
\begin{thmlist}
\item \label[proposition]{thm:weaksatnumber}  
Let $\bfm$ be as in~\Cref{thm:weaksat}.
Then, $\wsat \left(K^{(d+1)}_n; K^{(d+1)}_\bfm\right) = \binom{n - r}{d + 1}$.

\item \label[proposition]{thm:weaksatfamilynumber} 
Let $\bfm_i$ be as in~\Cref{thm:weaksatfamily}.
Then, $\w \left(K^{(d+1)}_\bfn; \{K^{(d+1)}_{\bfm_1}, \ldots, K^{(d+1)}_{\bfm_{d+1}}\}\right) = \prod_{i\in [d+1]} (n_i - r_i)$.
\end{thmlist}
\end{prop}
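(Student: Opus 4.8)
I would prove each of the two identities by matching an upper and a lower bound. In both cases the upper bound comes from exhibiting one small weakly saturated host, namely the complement of the known extremal nerve from the corresponding Helly theorem, and the lower bound comes from the standard rank argument for weak saturation: assign to each edge $e$ of the host a vector $w_e$ in a fixed vector space so that (a) $\{w_e : e \in E(G)\}$ spans a space of the claimed dimension, and (b) for every (directed) copy of a forbidden graph and every edge $e$ of that copy, $w_e$ lies in the span of the vectors of the remaining edges of that copy. Property (b) guarantees that adding an edge along a weak saturation sequence never enlarges $\spn\{w_e\}$, so $\spn\{w_e : e \in E(H)\} = \spn\{w_e : e \in E(G)\}$ for any weakly saturated $H$, whence $\size{E(H)} \ge \dim \spn\{w_e : e \in E(G)\}$.

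\textbf{Upper bounds.} For \Cref{thm:weaksatnumber}, fix a set $R$ of $r$ vertices, let $W$ be the remaining $n-r$ vertices, and let $H$ have as edges exactly the $(d+1)$-subsets of $W$, so $\size{E(H)} = \binom{n-r}{d+1}$. I would then add the missing edges by absorbing the vertices of $R$ one at a time: once the complete $(d+1)$-graph on $W \cup \{u_1, \dots, u_{j-1}\}$ has been built, for each $d$-set $T$ in this vertex set I add $\{u_j\} \cup T$, which completes a copy of $K^{(d+1)}_\bfm$ whose core is $T$ and whose petal set is $\{u_j\}$ together with $n-d-r$ already-present vertices $w$ with $T \cup \{w\}$ an edge (for $j=1$ one takes all of $W\setminus T$, which has exactly $n-d-r$ elements). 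For \Cref{thm:weaksatfamilynumber}, the analogous host is the complement of the extremal example for \Cref{thm:colfrachel}: fix $R_i \subseteq V_i$ with $\size{R_i} = r_i$, set $W_i = V_i \setminus R_i$, and let $H$ have as edges all rainbow transversals lying in $\prod_i W_i$, so $\size{E(H)} = \prod_i (n_i - r_i)$. One fills in the colour classes one at a time: having completed every rainbow edge in $V_1 \times \cdots \times V_{j-1} \times W_j \times \cdots \times W_{d+1}$, for each $u \in R_j$ and each partial transversal $c$ of the other classes in this product, adding $c \cup \{u\}$ creates a directed copy of $K^{(d+1)}_{\bfm_j}$ with core $c$ and petal set $\{u\} \cup W_j$.

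\textbf{Lower bounds.} For \Cref{thm:weaksatnumber} I would pick vectors $x_v \in \mathbb{R}^{n-r}$ for $v \in [n]$ in general position (e.g.\ Vandermonde vectors, so that any $n-r$ of them form a basis) and let $w_e$ be the vector whose coordinates are the $(d+1) \times (d+1)$ minors of the matrix with rows $(x_v)_{v \in e}$; this is ordinary linear algebra, with $w_e$ living in a space of dimension $\binom{n-r}{d+1}$, and the $w_e$ span it since some $n-r$ of the $x_v$ are a basis. For property (b): in a sunflower copy with core $C$ ($\size{C}=d$) and petal set $P$ ($\size{P}=n-d-r+1$), the vectors $\{w_{C \cup p}\}_{p \in P}$ are, up to sign, obtained from $\{x_p\}_{p \in P}$ by wedging with the fixed $\bigwedge_{c \in C} x_c$; since any $n-d-r$ of the $x_p$ together with the $d$ vectors $x_c$ form $n-r$ generic vectors, any $n-d-r$ of the $w_{C \cup p}$ are linearly independent, so each $w_{C \cup p}$ is in the span of the others. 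For \Cref{thm:weaksatfamilynumber}, where the host is $(d+1)$-partite, I would instead use a tensor product: pick generic $x_v \in \mathbb{R}^{n_i - r_i}$ for $v \in V_i$ and set $w_e := \bigotimes_{i} x_{e_i} \in \bigotimes_{i} \mathbb{R}^{n_i - r_i}$, a space of dimension $\prod_i (n_i - r_i)$, which the $w_e$ span. In a directed copy of $K^{(d+1)}_{\bfm_j}$ with core $c$ and petal set $P \subseteq V_j$ of size $n_j - r_j + 1$, the edge vectors are $\bigl(\bigotimes_{i \neq j} x_{c_i}\bigr) \otimes x_p$ with $p$ ranging over $P$; these are the images of the $n_j - r_j + 1$ generic vectors $\{x_p\}_{p \in P} \subseteq \mathbb{R}^{n_j - r_j}$ under an injective linear map, so again each is in the span of the rest.

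\textbf{Main obstacle.} The upper-bound constructions are essentially routine once the extremal nerves are recalled; the content is the lower bound, i.e.\ choosing vectors satisfying property (b). For the partite identity the tensor product makes this transparent. For \Cref{thm:weaksatnumber} the point is that every copy of $K^{(d+1)}_\bfm$ is a sunflower with a $d$-element core, which is exactly what matches the "$(d+1)$-minor" vectors with the target dimension $\binom{n-r}{d+1}$; the verification then reduces to the elementary fact that $n-r$ generic vectors in $\mathbb{R}^{n-r}$ are a basis, and getting this general-position bookkeeping exactly right (so that any $n-d-r$ of the relevant minor vectors, not merely the full family, are independent) is the step that requires care.
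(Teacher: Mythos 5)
Your proposal is correct, and for \Cref{thm:weaksatfamilynumber} it coincides with the paper's argument: the paper's space of ``rainbow monomials'' $x_{1,j_1}\cdots x_{d+1,j_{d+1}}$ with a generic linear form $p(v)$ attached to each vertex is exactly your tensor product $\bigotimes_i \bR^{n_i-r_i}$ with $w_e=\bigotimes_i x_{e_i}$, and the verification is the same. For \Cref{thm:weaksatnumber}, however, you take a genuinely different route on the lower bound: you work in the exterior power $\bigwedge^{d+1}\bR^{n-r}$ (vectors of $(d+1)\times(d+1)$ minors of generic Vandermonde rows), whereas the paper stays in the symmetric world, assigning to each vertex a generic linear form in $n-d-r$ variables and to each edge the product of its $d+1$ forms, so that the ambient space is the degree-$(d+1)$ homogeneous polynomials, of dimension $\binom{n-d-r+d}{d+1}=\binom{n-r}{d+1}$; there the sunflower relation is immediate because $n-d-r+1$ linear forms in $n-d-r$ variables are automatically dependent and multiplication by the fixed core polynomial preserves the relation. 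This symmetric-power choice is precisely what lets the authors advertise a proof ``without exterior algebra,'' while your wedge/minor formulation is the classical Kalai--Alon--Kalai--Pikhurko style argument dressed as matrix minors; both hit the same dimension, one counting $(d+1)$-subsets of an $(n-r)$-set, the other counting degree-$(d+1)$ monomials in $n-d-r$ variables. One small point in your write-up: ``any $n-d-r$ of the $w_{C\cup p}$ are independent, so each is in the span of the others'' is not by itself a valid inference --- you also need that all $n-d-r+1$ of them are dependent, which holds because they lie in the image of $y\mapsto y\wedge x_{c_1}\wedge\cdots\wedge x_{c_d}$, of dimension $n-r-d$; more directly, expand $x_{p_0}$ in the basis formed by the other petals together with the core and wedge with the core, which kills the core terms and exhibits the desired linear combination. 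Finally, you spell out the saturation sequences for the upper bounds (absorbing the $r$ deleted vertices, resp.\ the classes $R_j$, one at a time), which the paper only asserts by pointing to $K^{(d+1)}_{n-r}$ and $K^{(d+1)}_{\bfn-\bfr}$; your constructions are correct and fill in that routine detail.
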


Note that \Cref{thm:frachel,thm:colfrachel} follow from \Cref{lem:CollapseSaturate,thm:weak sat}.
The equality in \Cref{thm:weaksatnumber} is achieved by the clique $K^{(d+1)}_{n-r}$, whereas the equality in \Cref{thm:weaksatfamilynumber} is achieved by the complete $(d+1)$-partite $(d+1)$-graph $K^{(d+1)}_{\bfn - \bfr}$, where ${\bfn - \bfr = (n_1-r_1,\dots,n_{d+1}-r_{d+1})}$.
Although both weak saturation numbers in \Cref{thm:weak sat} are known in the literature (see \cite[Theorem~1.1]{Pikhurko} for \Cref{thm:weaksatnumber} and \cite[Theorem~4]{Bootstrap} for \Cref{thm:weaksatfamilynumber}), we include short proofs using only simple linear algebra. 
The whole story is summarized in the following diagram.
\begin{figure}[htbp]
    \centering
    \includegraphics[scale=.82]{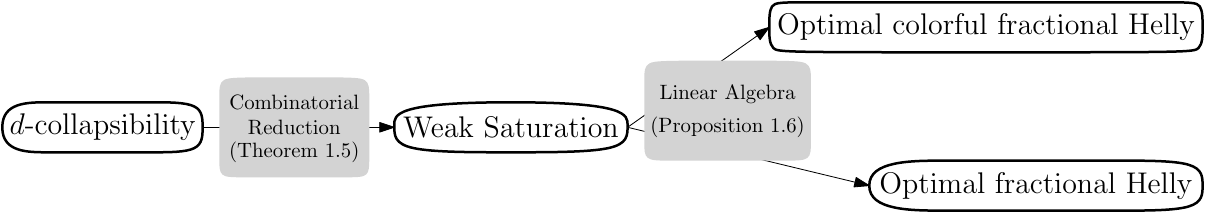}
\end{figure}

\section{Reduction of (colorful) fractional Helly to weak saturation}

Let $K$ be a simplicial complex and $L$ be a complex whose faces are precisely the faces of $K$ of size at most $d-1$.
It was observed in \cite{AlonKalai} that $K$ is $d$-collapsible if and only if $L$ can be obtained from $K$ by a sequence of elementary $d$-collapses.
So, when a simplicial complex $K$ is $d$-collapsible, there is a sequence of simplicial complexes $K=K_0, K_1, \ldots, K_m = L$ where $K_{i+1}$ can be obtained from $K_i$ by an elementary $d$-collapse, that is, there is $\sigma_{i+1} \in K_i$ with $\size{\sigma_{i+1}}=d$ that is contained in unique maximal face $\tau_{i+1} \in K_i$ such that removing all faces $\eta$ with $\sigma_{i+1} \subseteq \eta \subseteq \tau_{i+1}$ from $K_i$ results in $K_{i+1}$. 
The sequence $(\sigma_1, \tau_1),(\sigma_2, \tau_2),\ldots,(\sigma_m, \tau_m)$ is called a {\em $d$-collapse sequence} of $K$.

\begin{proof}[{\bf Proof of Theorem~\ref{thm:weaksat}}]
Let $(\sigma_1, \tau_1), \ldots, (\sigma_m, \tau_m)$ be a $d$-collapse sequence of $K$.
Define $K_0=K$ and for every $j\in [m]$, let $K_j$ be the simplicial complex obtained after $j$ steps of the collapse. 
For $0 \leq j \leq m$, let $H_j$ be the $(d+1)$-graph on $V$ whose edges are all $d$-dimensional non-faces of $K_j$.

Since every face in $K$ has size at most $d+r$, we have $\size{\tau_j} \leq d+r$ for every $j\in [m]$.
On the other hand, since $\tau_j$ is the unique maximal face in $K_{j-1}$ containing $\sigma_j$, it must be that $\{v\} \cup \sigma_j \notin K_{j-1}$ for each $v \in V \setminus \tau_j$.
This implies that $\{v\} \cup \sigma_j \in H_{j-1}$ for every $v \in V \setminus \tau_j$.
Thus for every $w \in \tau_j \setminus \sigma_j$, adding the edge $\{w\} \cup \sigma_j$ to $H_{j-1}$ creates a copy of
$K^{(d+1)}_{1, \ldots, 1, n-d-r+1}$, 
namely $\{\{w\} \cup \sigma_j\} \cup \{\{v\} \cup \sigma_j : v \in V \setminus \tau_j\}$.
By the definition of $d$-collapse, $H_m$ is a copy of $K^{(d+1)}_n$ and $H_j = H_{j-1} \cup \{ \{w\} \cup \sigma_j : w \in \tau_j \setminus \sigma_j\}$ for every $j\in [m]$.
Therefore, any ordering of $E(H_m) \setminus E(H)$ where every $e \in E(H_i) \setminus E(H_{i-1})$ precedes every $e' \in E(H_j) \setminus E(H_{j-1})$ for all $1 \leq i < j \leq m$ is a weak $K^{(d+1)}_\bfm$-saturation sequence of~$H$.
\end{proof}

\begin{proof}[{\bf Proof of Theorem~\ref{thm:weaksatfamily}}]
Let $(\sigma_1, \tau_1), \ldots, (\sigma_m, \tau_m)$ be a $d$-collapse sequence of $K$.
Define $K_0=K$ and for every $j\in [m]$, let $K_j$ be the simplicial complex obtained after $j$ steps of the collapse. 
For $0\le j\le m$, let $H_j$ be the $(d+1)$-graph on $V = V_1 \cup \cdots \cup V_{d+1}$ whose edges are all rainbow non-faces of size $d+1$ in $K_j$.

Note that if $\sigma_j$ is not rainbow, then no rainbow simplices will be removed by the elementary $d$-collapse of $\sigma_j$ in the $d$-collapse sequence of $K$.
Suppose that $\sigma_j$ is a rainbow set for some $j\in [m]$.
Without loss of generality, we may assume that $\sigma_j \in K[V_2 \cup \cdots \cup V_{d+1}]$.
Consider $W_1 := \tau_j \cap V_1$.
Since every face in $K[V_1]$ has size at most $r_1$, we get $\size{W_1} \leq r_1$.
On the other hand, since $\tau_j$ is the unique maximal face in $K_{j-1}$ containing $\sigma_j$, for every $v \in V_1 \setminus W_1$ we know that $\{v\} \cup \sigma_j \notin K_{j-1}$.
Thus, whenever $E(H_j) \setminus E(H_{j-1})$ is nonempty, $W_1$ is nonempty and for every $w \in W_1$, adding $\{w\} \cup \sigma_j$ to $H_{j-1}$ creates a directed copy of $K^{(d+1)}_{\bfm_1}$, namely $\{\{w\} \cup \sigma_j\} \cup \{\{u\} \cup \sigma_j : u \in V_1 \setminus W_1\}$.
By the definition of $d$-collapse, $H_m$ is a copy of $K^{(d+1)}_\bfn$ with the vertex partition $V = V_1 \cup \cdots \cup V_{d+1}$.
Therefore, similar to before, any ordering of $E(H_m) \setminus E(H)$ where every $e \in E(H_i) \setminus E(H_{i-1})$ precedes every $e' \in E(H_j) \setminus E(H_{j-1})$ for all $1 \leq i < j \leq m$ is a directed weak $\{K^{(d+1)}_{\bfm_1}, \ldots, K^{(d+1)}_{\bfm_{d+1}}\}$-saturation sequence of $H$.
\end{proof}

\section{Weak saturation numbers}
We use a lemma from \cite{Bootstrap} that is helpful to deal with lower bounds in weak saturation problems. 
\begin{lemma}[{\cite[Lemma~3]{Bootstrap}}] 
\label{lemma}
Let $G$ be a hypergraph and $\cF$ be a family of hypergraphs. 
Let $X$ be the hypergraph on the edge set of $G$ such that $A \in E(X)$ if and only if $A$ forms a (directed) copy of some $F \in \cF$.
Let $W = \{w_e : e \in G\}$ be a set of vectors such that $w_e \in \spn( \{ w_f : f \in A \setminus\{e\} \} )$ for every edge $A \in E(X)$ and $e \in A$.
If $H$ is a (directed) weakly $\cF$-saturated hypergraph in $G$, 
then $\size{E(H)}\ge \dim(\spn(W))$.
In other words, $\wsat(G; \cF)$ (or, $\w(G; \cF)$) is at least $\dim(\spn(W))$.
\end{lemma}
\begin{proof}
Let $e_1, \ldots, e_m$ be a sequence of edges in $G$ that certifies that $H$ is (directed) weakly $\cF$-saturated.
Let $W_0 = \spn(\{w_e: e \in H\})$.
Having defined $W_{j-1}$, inductively define $W_j = \spn(W_{j-1} \cup \{w_{e_j}\})$ for $j\in [m]$.
By the assumption, we have $W_j=W_{j-1}$, and hence it must be $W_0=W_m$.
To finish the proof of the lemma, observe that
$\size{E(H)} \ge \dim(W_0) = \dim(W_m) = \dim(\spn(W))$.
\end{proof}

\begin{proof}[{\bf Proof of Proposition~\ref{thm:weaksatnumber}}]
Let $V$ be the vertex set of the host hypergraph~$K^{(d+1)}_n$.
Let $U \subseteq \bR[x_1, \ldots, x_{n-d-r}]$ be the vector space of homogeneous polynomials of degree $d+1$ that is spanned by the monomials $\{x_{j_1} x_{j_2} \cdots x_{j_{d+1}} : j_1, \dots, j_{d+1} \in [n-d-r]\}$. Note that $\dim(U)= \binom{n-r}{d+1}$.
For each vertex $v\in V$, assign a homogeneous polynomial of degree one $p(v)\in \bR[x_1,\dots,x_{n-d-r}]$ such that the tuples of corresponding coefficients are in general position in $\bR^{n-d-r}$. 
Then, for every $(d+1)$-tuple $(v_1,\dots,v_{d+1})$, assign the polynomial $\prod_{i\in [d+1]} p(v_i) \in U$.
With these vector assignments, one can easily check that Proposition~\ref{thm:weaksatnumber} follows from applying Lemma~\ref{lemma}.
\end{proof}

\begin{proof}[{\bf Proof of Proposition~\ref{thm:weaksatfamilynumber}}]
Let $V=V_1\cup \dots \cup V_{d+1}$ be the vertex partition of the host hypergraph~$K^{(d+1)}_\bfn$.
Let $U\subseteq \bR[x_{1,1},\dots,x_{1,n_1-r_1},\dots,x_{d+1,1},\dots,x_{d+1,n_{d+1}-r_{d+1}}]$ be the vector space spanned by the monomials $\{x_{1,j_1}x_{2,j_2}\cdots x_{d+1,j_{d+1}}: j_i\in [n_i-r_i]\}$, i.e., all degree ${d+1}$ rainbow monomials. Note that $\dim(U)=\prod_{i\in [d+1]} (n_i-r_i)$.
For each vertex $v\in V_i$ with $i\in [d+1]$, assign a homogeneous polynomial of degree one $p(v)\in \bR[x_{i,1},\dots,x_{i,n_i-r_i}]$ such that the tuples of corresponding coefficients are in general position in $\bR^{n_i-r_i}$. 
Then, for every rainbow $(d+1)$-tuple $(v_1,\dots,v_{d+1})$, assign the polynomial $\prod_{i\in [d+1]} p(v_i) \in U$.
Similar to before, Proposition~\ref{thm:weaksatfamilynumber} follows from applying Lemma~\ref{lemma}.
\end{proof}

\end{document}